\newtheorem{thm}{Theorem}[section]
\newtheorem{fact}[thm]{Fact}
\newtheorem{pro}[thm]{Proposition}
\newtheorem{rem}[thm]{Remark}
\newtheorem{que}[thm]{Question}
\theoremstyle{definition}
\newtheorem{defi}[thm]{Definition}
\newtheorem*{acknowledgement}{Acknowledgement}
\newcommand{\M}{\mathcal{M}}
\newcommand{\F}{\mathcal{F}}
\newcommand{\U}{\mathcal{U}}
\newcommand{\V}{\mathcal{V}}
\newcommand{\G}{\mathcal{G}}
\newcommand{\B}{\mathcal{B}}
\newcommand{\C}{\mathcal{C}}
\newcommand{\Power}{\mathcal{P}}
\newcommand{\N}{\mathbb{N}}
\newcommand{\edowod}{\hfill$\square$}
\begin{document}
\title{THE DIMENSION OF HYPERSPACES OF NON-METRIZABLE CONTINUA}
\author{Wojciech Stadnicki}
\address{Mathematical Institute\\University of Wroc\l aw\\
Pl. Grunwaldzki 2/4, 50--384 Wroc\l aw\\Poland}
\email{stadnicki@math.uni.wroc.pl}
\date{\today}
\subjclass[2010]{Primary 54F45; Secondary 03C98, 54B20}
\keywords{continuum, hyperspace, dimension, C-space, elementary submodel, Wallman space}

\begin{abstract}
We prove that, for any Hausdorff continuum $X$, if $\dim X \ge 2$ then the
hyperspace $C(X)$ of subcontinua of $X$ is not a $C$-space;
if $\dim X=1$ and $X$ is hereditarily indecomposable then $\dim C(X)=2$
or $C(X)$ is not a $C$-space.
This generalizes results known for metric continua. 
\end{abstract}
\maketitle

\section{Introduction}
Throughout the paper all spaces are normal.
A continuum is a compact, connected Hausdorff space.
By dimension we always mean the covering dimension $\dim$.
A continuum $X$ is hereditarily indecomposable iff for each subcontinua
$A, B\subseteq X$ we have $A\subseteq B$, $B\subseteq A$ or $A\cap B=\emptyset$.
For a compact $X$ denote by $K(X)$ the hyperspace
 of all non-empty subcompacta of $X$,
equipped with the Vietoris topology.
By $C(X)$ we denote the hyperspace of all non-empty subcontinua of $X$,
with the topology inherited from $K(X)$.

\begin{defi}
A space $X$ is a $C$-space (or has property $C$) if and only if
for each sequence $\U_1, \U_2, \ldots$
of open covers of $X$, there exists a sequence $\V_1, \V_2, \ldots$,
such that each $\V_i$ is a family of pairwise disjoint open subsets of $X$,
$\V_i\prec\U_i$ ($\V_i$ refines $\U_i$, i.e. $\forall~V\in\V_i~\exists U\in\U_i~V\subseteq U$) and
$\bigcup_{i=1}^\infty \V_i$ is a cover of $X$. 
\end{defi}
We refer to \cite{EngD} for basic properties of $C$-spaces.
It is easy to observe that $C$-spaces are weakly infinite dimensional.
The class of $C$-spaces contains finite dimensional spaces
and countable dimensional metric spaces.

We prove the following theorem:
\begin{thm}
\label{non-metric}~
\begin{enumerate}[$(i)$]
\item
Suppose $X$ is a continuum of dimension $\geq 2$. Then $C(X)$ is not a $C$-space.
\item Suppose $X$ is a 1-dimensional hereditarily indecomposable continuum.
Then either $\dim C(X)=2$ or $C(X)$ is not a $C$-space.
\end{enumerate}
\end{thm}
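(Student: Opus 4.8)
The plan is to reduce both statements to their already‑known metric counterparts via elementary submodels and Wallman reflections. I will use two facts about metric continua: if $Y$ is a metric continuum with $\dim Y\ge 2$ then $C(Y)$ is not a $C$-space, and if $Y$ is a $1$-dimensional hereditarily indecomposable metric continuum then $\dim C(Y)=2$. The reflection machinery is as follows. Fix a large regular cardinal $\theta$; for a countable elementary submodel $M\prec H(\theta)$ with $X\in M$ let $\mathcal L_M$ be the lattice of closed subsets of $X$ lying in $M$, and let $X_M$ be the Wallman space of $\mathcal L_M$. Since $\mathcal L_M$ is countable, $X_M$ is a metric continuum, and the map $\pi_M\colon X\to X_M$ sending a point to the filter of members of $\mathcal L_M$ containing it is a continuous surjection with $\dim X_M\le\dim X$. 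Arranging the submodels into a directed family, one gets $X=\varprojlim_M X_M$, hence $K(X)=\varprojlim_M K(X_M)$ and $C(X)=\varprojlim_M C(X_M)$ with the induced bonding maps; moreover $\dim X=\sup_M\dim X_M$.

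For part $(i)$, assume $\dim X\ge 2$. By the reflection of covering dimension there is $M$ with $\dim X_M\ge 2$, and one may take this $M$ so that $\pi_M$ is \emph{monotone} (this requires choosing $M$ with care, and is the first technical lemma). When $\pi_M$ is monotone, $\pi_M^{-1}[B]$ is a subcontinuum of $X$ whenever $B\in C(X_M)$, so $B\mapsto\pi_M^{-1}[B]$ is a continuous injection, hence a closed embedding, of $C(X_M)$ into $C(X)$ with closed image. By the metric fact $C(X_M)$ is not a $C$-space; since a closed subspace of a $C$-space is a $C$-space, $C(X)$ is not a $C$-space.

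For part $(ii)$, assume $\dim X=1$ and $X$ hereditarily indecomposable; $\dim C(X)\ge 2$ is automatic, so I must show $\dim C(X)\le 2$ under the hypothesis that $C(X)$ is a $C$-space. Here the crucial observation is that in a hereditarily indecomposable continuum any two intersecting subcontinua are comparable, so a linked family of subcontinua is a chain and its intersection is again a subcontinuum; consequently the Wallman reflection built from the subcontinua of $X$ in $M$ is monotone, and arguing as in part $(i)$ it is a $1$-dimensional hereditarily indecomposable metric continuum, so the $C(\cdot)$ of it has dimension $2$. If these monotone reflections were cofinal in the inverse system, then $\dim C(X)\le\sup_M\dim C(X_M)\le 2$ (dimension never exceeds the supremum of the stages of an inverse limit of compacta) with no further hypothesis. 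They need not be, because a reflection rich enough to capture a given finite open cover of $C(X)$ may be forced to distinguish points that the subcontinuum reflection identifies, destroying monotonicity and allowing $\dim C(X_M)>2$. This is exactly where the $C$-space hypothesis enters: given a finite open cover $\mathcal W$ of $C(X)$, refine it by a cover pulled back from some $C(X_M)$, cover the ``non-monotone part'' of $C(X_M)$ by a sequence of covers coming from progressively finer reflections, apply the $C$-space property of $C(X)$ to that sequence to obtain disjoint partial refinements, and splice the result into a single order $\le 2$ open refinement of $\mathcal W$; hence $\dim C(X)\le 2$.

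The main obstacle is twofold. First, the monotone‑reflection lemma: establishing that for a continuum, and for the sharper version needed in $(ii)$ for a hereditarily indecomposable continuum, there are cofinally many countable elementary submodels whose Wallman reflection map is monotone — equivalently, that the induced map on $C(\cdot)$ admits a section which is a closed embedding. Second, in part $(ii)$, the quantitative step in which the $C$-space property of $C(X)$ is spent to compensate for reflections that fail to be monotone; making precise the ``splicing'' of countably many partial refinements into an order $\le 2$ refinement is the delicate point, and is what keeps the conclusion of $(ii)$ from holding unconditionally. Everything else — the inverse‑limit representation $C(X)=\varprojlim_M C(X_M)$, reflection of $\dim$, the inequality $\dim\varprojlim\le\sup\dim$ for inverse limits of compacta, the fact that pulling a cover back along a map does not increase its order, and permanence of the $C$-space property under closed subspaces — is routine.
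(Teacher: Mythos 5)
Your overall strategy---reflect to a countable elementary submodel, obtain a metric continuum, and quote the metric theorems---is the right one, but the mechanism you propose for transferring the conclusion back up to $C(X)$ has two genuine gaps, and it is precisely at these points that the paper does something different. First, your part $(i)$ rests on an unproven ``monotone reflection lemma'' and, even granting monotonicity of $\pi_M$, on the claim that $B\mapsto\pi_M^{-1}[B]$ is a continuous (closed) embedding of $C(X_M)$ into $C(X)$. That map is injective, but it is continuous for the Vietoris topology only when $\pi_M$ is an \emph{open} map: the condition $\pi_M^{-1}[B]\cap U\neq\emptyset$ translates to $B\cap\pi_M[U]\neq\emptyset$, which is an open condition on $B$ only if $\pi_M[U]$ is open. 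Neither openness nor monotonicity of the reflection map can be arranged for a general continuum, so the closed copy of $C(X_M)$ inside $C(X)$ that your argument needs does not materialize. Second, in part $(ii)$ you attempt to prove $\dim C(X)\le 2$ directly from the hypothesis that $C(X)$ is a $C$-space, and the ``splicing'' of countably many partial refinements into an order $\le 2$ refinement is left entirely unspecified; as you yourself note, this is the delicate point, and no argument is given. This is also more than is needed: the metric theorem already delivers the dichotomy ``$\dim C(Y)=2$ or $C(Y)$ is not a $C$-space,'' and the task is only to transfer \emph{both} alternatives upward.

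The paper avoids both difficulties by never comparing $C(X)$ and $C(wL)$ through a map between them. Instead it represents $C(X)$ itself as a Wallman space: one defines a lattice $K^*\in\M$ of closed subsets of $C(X)$, generated by the canonical basic closed sets $\F^*$ for finite $\F\subseteq 2^X$, so that $wK^*\cong C(X)$, and then checks (Proposition \ref{kluczowa-propozycja}) that $w(K^*\cap\M)\cong C(wL)$ --- i.e.\ the hyperspace construction commutes with the reflection at the level of lattices. After that, the general two-way reflection theorems for covering dimension (Proposition \ref{refl-dim}) and for property $C$ (Theorem \ref{relf-C}), applied to the pair $(K^*,K^*\cap\M)$ rather than to $(2^X,L)$, transfer ``not a $C$-space'' and ``$\dim=2$'' from $C(wL)$ to $C(X)$ by pure elementarity, with no monotonicity, no embeddings, and no cover-splicing. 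If you want to salvage your outline, the step to supply is exactly this lattice-theoretic identification of $C(wL)$ with the reflection of a lattice presenting $C(X)$, together with the upward (``if $wK$ is not a $C$-space then neither is $wK^*$'') direction of the reflection of property $C$, which is proved by encoding the failure of property $C$ as a first-order statement about the lattice and applying elementarity.
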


The theorem is already known for metric continua.
Part $(i)$ was stated by M. Levin and J. T. Rogers, Jr.  in \cite{LeRo}.
Part $(ii)$ can be obtained using methods from \cite{EbNa, LeRo, LeSt} (see \cite[Theorem 3.1]{Sta}).

To prove it for non-metric spaces we use the technique
of lattices and Wallman representations
as well as some set-theoretical methods,
as it was done in \cite{BHHS}.
We refer to \cite{vdS} for the definition of a lattice and preliminary facts on Wallman spaces.
We consider only distributive and separative lattices.

\section{Lattices and Wallman spaces}

For a compact space $X$ we consider the lattice $2^X$ of closed subsets of $X$ with
$\cup$ and $\cap$ as lattice operations,
$\emptyset$ and $X$ as the minimal and maximal elements.
Each lattice $L$ corresponds to the Wallman space $wL$ consisting of all
ultrafilters on $L$. For $a\in L $ let $\widehat{a}=\lbrace u\in wL\colon a\in u\rbrace$. We define
the topology in $wL$ taking the family $\lbrace\widehat{a}\colon a\in L\rbrace$ as a base for closed sets.

It is easy to show that $w2^X$ is homeomorphic to $X$.
More generally, the following fact holds true:
\begin{fact}
\label{homeofact}
If $\F$ is a base for closed sets in $X$ which is closed under finite unions and intersections
(so $\F$ is a lattice),
then $w\F$ is homeomorphic to $X$.
\end{fact}
\begin{proof}
We define the homeomorphism $h\colon X\to w\F$ in the natural way:
$h(x)=\lbrace F\in\F\colon x\in F\rbrace$. It is not difficult but tedious to verify
that $h$ is a well-defined homeomorphism indeed. We leave it as an exercise.
\end{proof}
\begin{defi} A lattice $L$ is normal iff
$$L\models\forall a, b\left(a\cap b=0_L \rightarrow \exists c, d \left(
c\cup d=1_L \wedge c\cap a=0_L
\wedge d\cap b=0_L\right)\right)$$
\end{defi}
We collect some well-known observations.
\begin{fact}[see, e.g., \cite{vdS}]
\label{t2}
$L$ is normal if and only if $wL$ is Hausdorff.
\end{fact}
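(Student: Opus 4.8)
The plan is to exploit the dictionary between the lattice operations in $L$ and set operations on the basic closed sets $\widehat a\subseteq wL$. First I would record the routine identities $\widehat{a\cap b}=\widehat a\cap\widehat b$ and $\widehat{a\cup b}=\widehat a\cup\widehat b$ (the latter using that an ultrafilter on a distributive lattice is prime), $\widehat{0_L}=\emptyset$, $\widehat{1_L}=wL$, together with two facts that need a short argument and a choice principle: $wL$ is compact (finite intersection property plus Zorn), and, using separativity of $L$, one has $\widehat a\subseteq\widehat b$ iff $a\le b$; in particular $\widehat a=\emptyset$ iff $a=0_L$ and $\widehat a=wL$ iff $a=1_L$. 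Recall that $\{wL\setminus\widehat a:a\in L\}$ is a base for the open sets of $wL$.

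For the implication ``$L$ normal $\Rightarrow$ $wL$ Hausdorff'', I would take distinct ultrafilters $u\neq v$ and pick, without loss of generality, $a\in u\setminus v$. Since $v$ is a maximal proper filter while the filter generated by $v\cup\{a\}$ is strictly larger, that filter is improper, so there is $b\in v$ with $a\cap b=0_L$. Apply normality of $L$ to get $c,d$ with $c\cup d=1_L$, $c\cap a=0_L$, $d\cap b=0_L$. Then $c\notin u$ (otherwise $0_L=c\cap a\in u$) and likewise $d\notin v$, so $u\in wL\setminus\widehat c$ and $v\in wL\setminus\widehat d$, while $(wL\setminus\widehat c)\cap(wL\setminus\widehat d)=wL\setminus\widehat{c\cup d}=wL\setminus\widehat{1_L}=\emptyset$, which is the desired separation.

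For the converse, assume $wL$ is Hausdorff and let $a\cap b=0_L$. Then $\widehat a\cap\widehat b=\widehat{0_L}=\emptyset$, and $\widehat a,\widehat b$ are closed in the compact Hausdorff space $wL$, hence disjoint compacta, so there are disjoint open $U\supseteq\widehat a$ and $V\supseteq\widehat b$. Covering the compact set $\widehat a$ by finitely many basic open sets $wL\setminus\widehat{c_1},\dots,wL\setminus\widehat{c_n}$ all contained in $U$ and setting $c=c_1\cap\dots\cap c_n$, I get $\widehat a\cap\widehat c=\emptyset$ (so $a\cap c=0_L$) and $wL\setminus\widehat c\subseteq U$; symmetrically I obtain $d$ with $b\cap d=0_L$ and $wL\setminus\widehat d\subseteq V$. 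Finally $(wL\setminus\widehat c)\cap(wL\setminus\widehat d)\subseteq U\cap V=\emptyset$, so $\widehat{c\cup d}=\widehat c\cup\widehat d=wL$, whence $c\cup d=1_L$; this is exactly the normality of $L$.

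The bookkeeping with $\widehat{\cdot}$ is routine; the only genuinely delicate points are (a) establishing $\widehat a=\emptyset\Leftrightarrow a=0_L$ and $\widehat a=wL\Leftrightarrow a=1_L$, which is where distributivity and separativity (and the ultrafilter/prime ideal theorem) are actually used --- without separativity the second equivalence may fail --- and (b) in the converse direction, the compactness step that replaces the abstract neighbourhoods $U,V$ by honest lattice elements $c,d$. Since this is a standard fact (see \cite{vdS}), I would keep both halves brief.
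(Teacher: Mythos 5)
Your proof is correct, and it is the standard argument: the dictionary $\widehat{a\cap b}=\widehat a\cap\widehat b$, $\widehat{a\cup b}=\widehat a\cup\widehat b$, the use of maximality of an ultrafilter to produce $b\in v$ with $a\cap b=0_L$ in the forward direction, and compactness plus the basic open sets $wL\setminus\widehat c$ to extract lattice elements from the separating neighbourhoods in the converse. The paper itself gives no proof of this fact (it is stated with a pointer to \cite{vdS}), so there is nothing to diverge from; your write-up, including the explicit remarks on where separativity and the ultrafilter lemma are used, would serve as a faithful expansion of the cited reference.
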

\begin{fact}[{\cite[Theorem 2.6]{vdS}}]
\label{compactmetriclattice}
If $L$ is a countable normal lattice then $wL$ is a compact metric space.
\end{fact}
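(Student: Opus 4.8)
The plan is to verify that $wL$ is compact, Hausdorff, and second countable, and then to invoke the Urysohn metrization theorem in the form: a compact Hausdorff space with a countable base is metrizable. Hausdorffness is immediate from Fact~\ref{t2}, since $L$ is normal. Second countability is almost as quick: because $L$ is countable, the family $\{\widehat a\colon a\in L\}$ is a countable base for the closed sets of $wL$, so the complements $\{wL\setminus\widehat a\colon a\in L\}$ form a countable base for the open sets of $wL$.

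The substantive point is compactness of $wL$. I would first record two elementary facts about ultrafilters on $L$. (a) For $a\in L$ one has $\widehat a=\emptyset$ if and only if $a=0_L$: if $a\neq 0_L$ then the principal filter $\{b\in L\colon b\geq a\}$ is a proper filter (it omits $0_L$), hence by Zorn's lemma it is contained in some ultrafilter, which then lies in $\widehat a$. (b) The map $a\mapsto\widehat a$ carries lattice operations to set operations: $\widehat a\cap\widehat b=\widehat{a\cap b}$ because filters are closed under finite meets, and $\widehat a\cup\widehat b=\widehat{a\cup b}$ because in a distributive lattice every ultrafilter is prime, so $a\cup b\in u$ implies $a\in u$ or $b\in u$.

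Since an arbitrary closed subset of $wL$ is an intersection of sets of the form $\widehat a$, compactness follows once we know that every family $\{\widehat{a_i}\colon i\in I\}$ of basic closed sets with the finite intersection property has nonempty intersection. Given such a family, (a) and (b) show that each finite meet $a_{i_1}\cap\cdots\cap a_{i_n}$ is nonzero, so $\{a_i\colon i\in I\}$ generates a proper filter on $L$; extending this filter to an ultrafilter $u$ by Zorn's lemma produces a point $u\in\bigcap_{i\in I}\widehat{a_i}$. Having established compactness, Hausdorffness, and second countability, metrizability follows from Urysohn, and together with compactness this is precisely the assertion that $wL$ is a compact metric space.

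The step I expect to demand the most care is the translation underlying compactness: making the notion of ultrafilter on a lattice fully precise (in the distributive, separative setting under consideration) and checking that the finite intersection property among basic closed sets of $wL$ corresponds exactly to the condition that $\{a_i\}$ generate a proper filter in $L$. Everything else — second countability, Hausdorffness from normality, and the metrization conclusion — is then a formal consequence.
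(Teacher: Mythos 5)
Your proof is correct, and since the paper states this fact only with a citation to van der Steeg's thesis, there is no in-paper argument to diverge from; your route (Hausdorffness from normality via Fact~\ref{t2}, a countable base from the countability of $L$, compactness via the finite intersection property for basic closed sets $\widehat{a}$ reduced to extending a proper filter to an ultrafilter, and then Urysohn metrization) is exactly the standard proof behind the cited result. The one reduction worth making explicit — that it suffices to check the finite intersection property on basic closed sets — does go through, because each closed set $C=\bigcap_{a\in A}\widehat{a}$ is contained in every $\widehat{a}$ with $a\in A$, so the finite intersection property passes from an arbitrary family of closed sets to the corresponding family of basic ones with the same total intersection.
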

\begin{rem}
\label{surject}
A sublattice $L$ of $L^*$ yields the continuous surjection \mbox{$q\colon wL^*\to wL$},
 given by $q(u)=u\cap L$.
\end{rem}

\section{Proof of Theorem \ref{non-metric}}
The proof is rather simple, but it uses some set-theoretic framework.
We deal with some inner model of (large enough fragment of)
ZFC and its countable elementary submodel.

Our strategy is to bring the non-metric case to the metric one.
Suppose $X$ is a non-metric continuum.
We will find a countable sublattice $L\subseteq 2^X$ such that
$wL$ is a metric continuum, 
$\dim wL=\dim X$ and
$\dim C(wL)=\dim C(X)$.
Moreover, $wL$ [$C(wL)$] is hereditarily indecomposable if and only if such is $X$ [$C(X)$] and
 $wL$ [$C(wL)$] is a $C$-space if and only if such is $X$ [$C(X)$].

We apply the technique used in \cite{BHHS} to find the sublattice $L$.
 
For an infinite cardinal $\kappa$, $H(\kappa)$ is the set of all sets $x$, such that $|TC(x)|<\kappa$.
($TC$ is the transitive closure, i.e. $TC(x)=x\cup\bigcup x\cup\bigcup\bigcup x\cup\ldots$).
If $\kappa$ is regular then $H(\kappa)$ is a model of ZFC without the Power Set Axiom
(see \cite[p. 162]{Jech}). But if $\kappa$ is large enough, then there
are power sets in $H(\kappa)$ for all sets we need.

Let $X$ be a (non-metric) continuum. Fix a suitably large regular cardinal $\kappa$
(it is enough if $\Power(\Power(X))\in H(\kappa)$).
Take a countable elementary submodel \mbox{$\M\prec H(\kappa)$}, such that $X\in\M$
(use the L\"owenheim-Skolem theorem).
Then $\M$ also models enough of ZFC. Moreover, every finite subset
of $\M$ belongs to $\M$.
Denote $L=2^X\cap\M$. By elementarity, $L$ is a normal sublattice of $2^X$.
Since $L$ is countable, applying Fact \ref{compactmetriclattice} and Remark \ref{surject}, we obtain:
\begin{fact}
\label{wL-metric}
$wL$ is a metric continuum.
\edowod
\end{fact}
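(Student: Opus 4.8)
The plan is to separate the two ingredients of ``metric continuum'': compact metrizability on the one hand, connectedness on the other, and to read each off from a fact already recorded.

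First I would pin down the algebraic and cardinality properties of $L=2^X\cap\M$. It is a sublattice of $2^X$ because the operations $\cup,\cap$ on $2^X$ lie in $\M$ and $\M$ is closed under finite subsets, so it is closed under these operations; it is countable because $\M$ is; and it is normal by elementarity. For the last point, note that the statement ``$2^X$ is a normal lattice'' is, after fixing the parameter $2^X$, a first-order sentence all of whose quantifiers are bounded by $2^X$; since $\Power(\Power(X))\in H(\kappa)$ we have $2^X\in H(\kappa)$, the sentence holds there, and so it holds in $\M$. Hence for disjoint $a,b\in L$ the model $\M$ produces separators $c,d\in L$ with $c\cup d=1_L$, $c\cap a=0_L$, $d\cap b=0_L$. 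Now $L$ is countable and normal, so Fact \ref{compactmetriclattice} yields immediately that $wL$ is a compact metric space.

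For connectedness I would use that $2^X$ is a base for the closed sets of $X$ which is closed under finite unions and intersections, so by Fact \ref{homeofact} the Wallman space $w(2^X)$ is homeomorphic to $X$. Since $L\subseteq 2^X$, Remark \ref{surject} supplies a continuous surjection $q\colon w(2^X)\to wL$, i.e., after identifying $w(2^X)$ with $X$, a continuous surjection $X\to wL$. The continuous image of a connected space is connected, and $wL$ is nonempty since every lattice admits an ultrafilter; hence $wL$ is a nonempty connected compact metric space, that is, a metric continuum.

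There is essentially no obstacle here: all the content is in the quoted facts. The only step needing a moment of care is the elementarity argument establishing normality of $L$ --- one must make sure the relevant power sets belong to $H(\kappa)$ (which is exactly why $\kappa$ was chosen with $\Power(\Power(X))\in H(\kappa)$) and that normality of $2^X$ is witnessed by a sentence with suitably bounded quantifiers so that it reflects down to $\M$.
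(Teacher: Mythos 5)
Your proposal is correct and follows exactly the route the paper intends: normality of $L$ by elementarity together with countability gives a compact metric space via Fact \ref{compactmetriclattice}, and the surjection from Remark \ref{surject} (after identifying $w2^X$ with $X$) transfers connectedness. The paper compresses all of this into one sentence, so your write-up is simply a fuller version of the same argument.
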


Let us recall two well-known facts.
\begin{pro}[see {\cite[Subsection 4.1]{Hart}}]
\label{refl-dim}
$\dim X=\dim wL$. More generally, let $K^*$ be a lattice in
$\M$ and $K=K^*\cap\M$. Then $\dim wK^*=\dim wK$.
\end{pro}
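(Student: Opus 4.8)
The plan is to reduce the proposition to the classical lattice-theoretic description of covering dimension, and then to apply the same reflection principle for countable elementary submodels that was used above to see that $L$ is a normal sublattice of $2^X$.

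The first, and main, ingredient I would establish is this: for each $n\ge 0$ there is a single first-order sentence $\sigma_n$, in the language of lattices with least and greatest elements, such that for every normal distributive separative lattice $M$ one has $\dim wM\le n$ if and only if $M\models\sigma_n$. One obtains $\sigma_n$ from the classical partition characterization of covering dimension (see \cite{EngD}): a normal space $Y$ satisfies $\dim Y\le n$ iff for every $n+1$ pairs $(A_j,B_j)$ of disjoint closed subsets of $Y$ there are partitions $P_j$ between $A_j$ and $B_j$ with $\bigcap_{j=1}^{n+1}P_j=\emptyset$. In a lattice of closed sets the clause ``$P$ is a partition between $A$ and $B$'' is itself first-order (indeed existential): it asserts the existence of $c,d$ with $A\cap c=0$, $B\cap d=0$, $c\cup d=1$ and $c\cap d=P$. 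Applying this with $Y=wM$, and using that $\{\widehat a:a\in M\}$ is a base for the closed sets of the compact space $wM$ together with the identities $\widehat a\cap\widehat b=\widehat{a\cap b}$, $\widehat a\cup\widehat b=\widehat{a\cup b}$, $\widehat a=\emptyset\Leftrightarrow a=0_M$, $\widehat a=wM\Leftrightarrow a=1_M$, one shows that the condition need only be checked for basic closed sets $\widehat{a_j},\widehat{b_j}$ with partition elements $\widehat{p_j}$; rewriting it in terms of $a_j,b_j,p_j\in M$ yields the sentence $\sigma_n$. Specializing to $M=2^Y$ for compact Hausdorff $Y$ and recalling $w2^Y\cong Y$ gives $\dim Y\le n\Leftrightarrow 2^Y\models\sigma_n$. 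Details for this kind of translation are in \cite[Subsection~4.1]{Hart} and \cite{vdS}.

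Next I would invoke reflection. First, $2^X\in\M$: the lattice $2^X$ is definable in $H(\kappa)$ from the parameter $X\in\M$, so by elementarity it lies in $\M$; likewise any lattice named $K^*$ in the statement satisfies $K^*\in\M$. Now $K^*$ is a set-sized structure belonging to $\M\prec H(\kappa)$, and $H(\kappa)$ carries a satisfaction predicate for such structures; a routine Tarski--Vaught induction on formulas --- the point being that the quantifiers of $\M$ ranging over ``elements of $K^*$'' in fact range over $K^*\cap\M$ --- shows that the sublattice $K=K^*\cap\M$ is an elementary substructure of $K^*$ in the language of lattices. This is exactly the principle already used to conclude that $L$ is a normal sublattice of $2^X$; in particular all the standing first-order properties (distributivity, separativity, normality) pass from $K^*$ to $K$, and $K^*\models\sigma_n\Leftrightarrow K\models\sigma_n$ for every $n$. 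Combining the two ingredients, for every $n$ we obtain $\dim wK^*\le n\Leftrightarrow K^*\models\sigma_n\Leftrightarrow K\models\sigma_n\Leftrightarrow\dim wK\le n$, hence $\dim wK^*=\dim wK$; and taking $K^*=2^X$, $K=2^X\cap\M=L$ and using $w2^X\cong X$ gives $\dim X=\dim w2^X=\dim wL$.

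I expect the genuinely delicate step to be the construction of $\sigma_n$, i.e.\ the verification that $\dim wM\le n$ is detected on basic closed sets. Two compactness arguments are needed: (a) an arbitrary pair of disjoint closed subsets of $wM$ must be enlarged to a pair of disjoint basic closed sets $\widehat{a_j},\widehat{b_j}$; and (b), more delicately, an arbitrary partition $P_j$ between $\widehat{a_j}$ and $\widehat{b_j}$ must be enlarged to a basic partition $\widehat{p_j}$ lying inside a prescribed neighbourhood of $P_j$, so that, after first shrinking the $P_j$ slightly, the relation $\bigcap_j P_j=\emptyset$ survives the passage to the $\widehat{p_j}$. Both steps use normality of $M$, which is precisely what lets one separate disjoint closed sets by basic closed sets whose complements cover $wM$ --- this is the clause $c\cup d=1$ in the sentence. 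Once $\sigma_n$ is in hand, everything else is routine elementary-submodel bookkeeping.
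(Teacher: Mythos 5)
The paper gives no proof of this proposition --- it is quoted directly from Hart's \emph{Elementarity and Dimensions} --- and your argument is a correct reconstruction of the standard proof given there: encode $\dim\le n$ as a first-order lattice sentence verified on basic closed sets, then use that $K=K^*\cap\M$ is an elementary sublattice of $K^*$. The only (cosmetic) difference is that the cited source works with Hemmingsen's cover-shrinking characterization of covering dimension rather than the partition characterization you chose, but both translations, and the elementary-submodel bookkeeping that follows, are essentially identical.
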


\begin{pro}
\label{HI-refl}
A continuum $X$ is hereditarily indecomposable if and only if such is $wL$.
\end{pro}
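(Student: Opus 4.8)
The plan is to make ``$X$ is hereditarily indecomposable'' a \emph{first-order} property of the lattice $2^{X}$ and then transport it through $\M$, exactly as is done for dimension in Proposition~\ref{refl-dim}. The starting observation is that connectedness of a closed set is lattice-expressible: in a normal space a closed set $F$ is connected iff $F\neq a\cup b$ whenever $a\cap b=\emptyset$ and $a,b\neq\emptyset$; write $\gamma(x)$ for the corresponding first-order formula in the language of lattices. The key computational lemma I would prove is that for any separative normal lattice $K$ and any $a\in K$ one has $K\models\gamma(a)$ iff $\widehat{a}\subseteq wK$ is connected: the nontrivial implication starts from a clopen partition of $\widehat{a}$, refines each half by basic closed sets using compactness and Hausdorffness of $wK$, and pulls the resulting decomposition back into $K$ using separativity (together with $\widehat{b}=\emptyset\iff b=0_{K}$). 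Granting this, and since $X$ is hereditarily indecomposable exactly when no subcontinuum of $X$ is the union of two proper subcontinua --- i.e. when no connected closed set is the union of two proper connected closed sets --- the statement ``$X$ is hereditarily indecomposable'' becomes equivalent to $2^{X}\models\sigma$, where $\sigma$ is the sentence ``for all $a,b$, if $\gamma(a)$, $\gamma(b)$ and $\gamma(a\cup b)$ then $a\cup b=a$ or $a\cup b=b$''.

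Next I would reflect: since $2^{X}\in\M\prec H(\kappa)$ and $\M$'s copy of $2^{X}$ is exactly $L$, elementarity gives $2^{X}\models\sigma$ iff $L\models\sigma$, and $\gamma$ is absolute between $\M$ and $V$ because it is first-order over the lattice. So Proposition~\ref{HI-refl} reduces to the statement: $L\models\sigma$ iff $wL$ is hereditarily indecomposable. One direction is immediate from the lemma above: if $wL$ is hereditarily indecomposable then any two of its subcontinua meeting in a point are nested, so in particular the sets $\widehat{a}$ with $L\models\gamma(a)$ are pairwise nested or disjoint, which says $L\models\sigma$. For the converse, suppose $wL$ is not hereditarily indecomposable; since $wL$ is metric (Fact~\ref{wL-metric}), some subcontinuum $R$ of $wL$ decomposes as $R=R_{1}\cup R_{2}$ with $R_{1},R_{2}$ proper subcontinua, and I fix $z\in R_{1}\cap R_{2}$ and points $p_{i}\in R_{i}\setminus R_{3-i}$. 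If I can produce connected $a,b\in L$ with $R_{1}\subseteq\widehat{a}$, $p_{2}\notin\widehat{a}$, $R_{2}\subseteq\widehat{b}$, $p_{1}\notin\widehat{b}$, then $\widehat{a\cup b}=\widehat{a}\cup\widehat{b}$ is connected (two continua meeting at $z$), while $p_{2}\in\widehat{b}\setminus\widehat{a}$ and $p_{1}\in\widehat{a}\setminus\widehat{b}$ show that $\widehat{a}$ and $\widehat{b}$ are both proper in $\widehat{a\cup b}$; thus $a,b$ witness $L\not\models\sigma$, a contradiction.

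The step I expect to be the main obstacle is precisely the production of those connected $a,b\in L$ --- equivalently, showing that \emph{every subcontinuum of $wL$ is the intersection of the connected basic closed sets containing it}. A subcontinuum is trivially the intersection of all subcontinua containing it, so the real content is to replace these by members of $L$: one approximates from outside by sets $\widehat{c}$ (possible because $L$ is a base for the closed sets of $wL$), cuts away what lies beyond the prescribed exterior point by a compactness argument inside the lattice, and then has to recover connectedness of the resulting approximant. This is the analogue, for hereditary indecomposability, of the covering/shrinking lemma that underlies the dimension reflection cited in Proposition~\ref{refl-dim}, and it is where metrizability of $wL$ and the richness of $L$ are genuinely needed. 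An alternative, more geometric route would use the continuous surjection $q\colon X\to wL$ of Remark~\ref{surject}: one has $q^{-1}(\widehat{c})=c$ for $c\in L$, so a decomposition of a subcontinuum of $wL$ pulls back to a ``near-decomposition'' of a closed subset of $X$, and the task becomes that of extracting a genuine decomposable subcontinuum of $X$ by passing to suitable components. Either way, the comparison between honest subcontinua of $wL$ and basic closed sets is the delicate point; the reflection bookkeeping of the first two paragraphs is routine.
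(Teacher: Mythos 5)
Your first two paragraphs are sound, and they reproduce the half of the proposition the paper calls straightforward. The key lemma ($K\models\gamma(a)$ iff $\widehat{a}$ is connected, for a normal separative lattice $K$) is correct and is proved the way you indicate, and together with $\widehat{a\cap b}=\widehat{a}\cap\widehat{b}$ and the separativity fact $\widehat{a}\subseteq\widehat{b}\iff a\le b$ it yields the chain ``$wL$ hereditarily indecomposable $\Rightarrow L\models\sigma\Rightarrow 2^X\models\sigma\Rightarrow X$ hereditarily indecomposable'': here one only has to \emph{exhibit} a single intersecting non-nested pair of subcontinua of $wL$, and basic ones suffice. That is the \emph{if} part of Proposition~\ref{HI-refl}.

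The \emph{only if} part is where your proof genuinely stops, and the route you sketch for it is the wrong one. To get $L\models\sigma\Rightarrow wL$ HI you must handle arbitrary subcontinua $R_1,R_2$ of $wL$, and you reduce this to producing connected $a,b\in L$ squeezed between $R_i$ and prescribed exterior points --- in effect, to the claim that subcontinua of $wL$ are intersections of connected members of $L$. Nothing in the hypotheses delivers this: $L$ is a fixed countable lattice, the connected closed sets containing a given proper subcontinuum of $wL$ can be very scarce (if that subcontinuum lies in an indecomposable subcontinuum of $wL$, its proper connected closed neighbourhoods are all nowhere dense subcontinua of one composant), and the obvious repair --- approximate by some $\widehat{c}$ with $c\in L$ and pass to a component --- leaves the lattice, since components are not lattice-definable. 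The underlying problem is that your sentence $\sigma$, quantifying over \emph{connected} lattice elements, is too weak a first-order rendering of hereditary indecomposability to push upward from $L$ to $wL$. The paper does not attempt this either: for $X$ HI $\Rightarrow wL$ HI it simply invokes \cite[Lemma~2.2]{HartEPol}, whose proof rests on the Krasinkiewicz--Minc-type characterization of hereditary indecomposability by the existence of certain finite closed covers separating two disjoint closed sets in a ``crooked'' way. That characterization quantifies only over closed sets, so it is first-order over $2^X$ with no connectedness predicate, reflects to $L$ by elementarity, and is equivalent to hereditary indecomposability of the Wallman space of any normal lattice satisfying it. Replacing your $\sigma$ by that characterization (or citing the lemma, as the paper does) is the missing ingredient; as written, your argument proves only one direction.
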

The \emph{if} part is straightforward. For the 
\emph{only if} see \mbox{\cite[Lemma 2.2]{HartEPol}}.

Now we prove a similar fact about property $C$.
\begin{thm}
\label{relf-C}
The space $X$ is a $C$-space if and only if so is $wL$.
More generally, let $K^*$ be a lattice in
$\M$ and $K=K^*\cap\M$. Then $wK^*$ is a $C$-space if and only if such is $wK$.
\end{thm}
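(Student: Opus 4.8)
The general statement (with $K^*$ and $K = K^* \cap \M$) plainly implies the first sentence by taking $K^* = 2^X$, so I would prove only the general version. Write $q\colon wK^* \to wK$ for the continuous surjection of Remark \ref{surject}. The plan is to establish both implications separately.

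For the \emph{forward} direction (if $wK^*$ is a $C$-space then so is $wK$): property $C$ is preserved by closed continuous surjections of normal spaces — indeed, given covers $\U_i$ of $wK$, pull them back along $q$ to open covers $q^{-1}(\U_i)$ of $wK^*$, extract the witnessing disjoint families $\W_i$, and push forward. The only subtlety is that $q$ need not be open, so the images $q(W)$ need not be open; the standard fix is to use closedness of $q$ to shrink: for $W \in \W_i$ choose an open $W' \supseteq q(wK^* \setminus \text{(union of the other members)}) \ldots$ — more cleanly, one uses that a closed map from a normal space admits, for each open $W$, an open $V$ with $q^{-1}(V) \subseteq W$ and $\bigcup_W V \supseteq \bigcup_W q(W)$. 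Assembling the $V$'s over all $i$ gives the required sequence of disjoint families refining the $\U_i$ and covering $wK$. This half does not use elementarity at all.

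For the \emph{backward} direction (if $wK$ is a $C$-space then so is $wK^*$): here is where the elementary submodel does the work. Suppose $wK^*$ is \emph{not} a $C$-space; I want to conclude $wK$ is not a $C$-space. Since the base $\{\widehat a : a \in K^*\}$ for closed sets is a lattice, every finite open cover of $wK^*$ is refined by one of the form $\{wK^* \setminus \widehat{a_1}, \ldots, wK^* \setminus \widehat{a_n}\}$; compactness lets us restrict attention to finite covers throughout. Thus "$wK^*$ is not a $C$-space" is equivalent to the existence of a sequence $(a^i_1,\ldots,a^i_{n_i})_{i\in\N}$ of finite tuples from $K^*$, each tuple "covering" (i.e. $\bigcap_j a^i_j = 0$), such that no choice of disjoint refinements covers. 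The key point is that this whole assertion — covering, refinement, pairwise disjointness of sub-tuples, and failure to cover — is expressible by quantifying over \emph{finite} tuples from $K^*$ and over the lattice operations, and "$wK^*$ is not a $C$-space" is therefore a statement about $K^*$ that lives in $\M$ (note $K^*\in\M$). By elementarity, if it holds in $H(\kappa)$ it holds in $\M$, so $\M$ believes there is such a bad sequence; but $\M$ is countable, so $\M$'s bad sequence is a genuine sequence of finite tuples from $K^* \cap \M = K$, and — again by elementarity, checking that each step of the $C$-space definition is absolute for finite tuples — it genuinely witnesses that $wK$ is not a $C$-space. Taking contrapositives gives the implication.

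The main obstacle is the reflection argument in the backward direction: one must phrase "$X$ is a $C$-space" as a first-order property of the lattice $2^X$ (equivalently $wL$) rather than of the point-set, so that elementarity applies. The crucial reduction is that property $C$ for a compact space need only be tested against \emph{finite} open covers, and finite open covers correspond to finite tuples from the lattice via complements of basic closed sets; pairwise disjointness and the covering condition then become equalities and inequalities among finite lattice terms. Once that translation is in place — and once one checks that a sequence indexed by $\N$ and consisting of finite objects is handled correctly by the countable model (each term is in $\M$, hence the sequence restricts to $K$) — the proof is a routine application of the Löwenheim–Skolem/elementary-submodel machinery already used for Proposition \ref{refl-dim} and Proposition \ref{HI-refl}.
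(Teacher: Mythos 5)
Your backward direction (deducing that $wK$ is not a $C$-space from the assumption that $wK^*$ is not, via a first-order coding by finite tuples from the lattice, elementarity, and the countability of $\M$) is essentially the paper's argument for that implication, and the translation you describe --- finite covers as finite lattice tuples, with disjointness and covering expressed as lattice equations --- is exactly what is needed there.

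The forward direction, however, contains a genuine gap. Property $C$ is \emph{not} preserved by closed continuous surjections of compact spaces: the Cantor set (zero-dimensional, hence a $C$-space) maps continuously onto the Hilbert cube (strongly infinite-dimensional, hence not a $C$-space), and every continuous map between compacta is closed. Since every continuous surjection onto a compact Hausdorff space arises from some sublattice via Remark \ref{surject}, an argument for this implication that ``does not use elementarity at all'' cannot be correct. The concrete failure in your sketch is the covering condition: if $\W_i$ is a disjoint family refining $q^{-1}(\U_i)$ and you shrink each $W$ to $V_W=wK\setminus q(wK^*\setminus W)$ (the largest open set with $q^{-1}(V_W)\subseteq W$), then a point $y$ lies in some $V_W$ only if the \emph{entire fiber} $q^{-1}(y)$ is contained in a single $W$; the members of $\bigcup_i\W_i$ need not be $q$-saturated, so a fiber can be split among several of them, and then the sets $V_W$ miss $y$. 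The paper's proof of this implication does use elementarity: the given covers of $wK$ are coded by parameters $F_{ij}\in\M$, one pulls them back to covers of $wK^*$, obtains disjoint refinements there from property $C$ of $wK^*$, and then expresses the existence of such refinements as a first-order sentence $\psi$ about the lattice $K^*$ with the $F_{ij}$ as parameters; elementarity then yields refinements whose complements lie in $K=K^*\cap\M$, and only those descend to the required disjoint refining cover of $wK$. You should replace the quotient-map argument by this reflection step.
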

\begin{proof}
We provide the proof for the first part of the proposition.
It can be easily adopted for the more general statement.

Denote
$\B=\lbrace wL\setminus \widehat{F}\colon\ F\in L\rbrace$ (the open base for $wL$, 
which is closed under finite unions and intersections).

~

$(\Leftarrow)$
We will show that if $X$ is not a $C$-space then neither is $wL$.
Assume X is not a C-space. Then, by
compactness there exists
a sequence $(\U_i)_{i=1}^\infty$ of finite
open covers of $X$, such that for every $m\geq1$ and finite families of open disjoint sets
$\V_1, \V_2,\ldots, \V_m$ which satisfy $\V_i\prec\U_i$,
their union $\V_1\cup \V_2\cup\ldots\cup\V_m$ is not a cover of $X$
(compactness allows to consider only finite families).
Translating it into terms of lattice $2^X$ we obtain that $H(\kappa)$
models the following sentence $\varphi$:\\\\
\begin{tabular}{ll}
$(\varphi)$ &
$
\hspace{-0.25cm}\left\{\hspace{0.2cm}
\begin{minipage}{0.91\textwidth}
There exists a sequence $(\F_i)_{i=1}^\infty$ of finite subsets of $2^X$ such that
for each $i\geq 1$ the intersection $\bigcap\F_i$ is empty and
for every $m\geq1$ and finite $\G_1,\G_2,\ldots,\G_m\subseteq2^X$ the following holds:\\\\
\begin{tabular}{ll}
$(\ast)$ &
\begin{minipage}{0.9\textwidth}
If for each
$j\leq m$ and $G\in\G_j$ there exists $F\in\F_j$ such that $F\subseteq G$ and 
for any distinct $G, G'\in\G_j$ we have $G\cup G'=X$, then
$\bigcap(\G_1\cup\G_2\cup\ldots\cup\G_m)\neq\emptyset$.
\end{minipage}
\end{tabular}
\end{minipage}\right.$
\end{tabular}\\\\
$\M\models\varphi$ by elementarity. So there is $(\F_i)_{i=1}^\infty\in\M$
as in $\varphi$, such that $(\ast)$ holds for every $m<\omega$ and $\G_1, \G_2, \ldots, \G_m\in\M$.

The sequence $(\F_i)_{i=1}^\infty$ gives rise to a sequence $(\U_i)_{i=1}^\infty$ of open covers of $wL$
(namely $\U_i=\lbrace wL\setminus \widehat{F}\colon F\in\F_i\rbrace$),
which witnesses that $wL$ is not a $C$-space.
Indeed, suppose we have a finite sequence
$\V_1, \V_2, \ldots, \V_m$ of finite families of open disjoint sets,
$\V_i\prec\U_i$ and their union is a cover of $wL$.
We can produce $\V'_1, \V'_2, \ldots, \V'_m$, which are additionally contained in the base $\B$:
shrink each $V\in\bigcup_{i=1}^m\V_i$ to a closed set $C_V$ so that
$\bigcup_{i=1}^m\{C_V\colon V\in\V_i\}$ forms a closed cover of $wL$.
Since $C_V$ is compact, it can be covered by finitely many sets $B^V_1, B^V_2,\ldots,B^V_{j(V)}\subseteq V$
from the basis $\B$. Let $V'=B^V_1\cup B^V_2\cup\ldots\cup B^V_{j(V)}$.
We have $V'\in\B$, since $\B$ is closed under finite unions.
Define $\V'_i=\{V'\colon V\in\V_i\}$.

Having $\V'_1, \V'_2, \ldots, \V'_m$ it is easy to get
$\G_1, \G_2, \ldots, \G_m\in\M$ which do not satisfy $(\ast)$.
Indeed, each $V'\in\V'_i$ is given by some $F_{V'}\in L$ via
$V'=wL\setminus\widehat F_{V'}$. Then $\G_i=\{F_{V'}\colon V'\in\V_i'\}$.
Since $\V_i'\subseteq\B$, we have $\G_i\in\M$.

~

$(\Rightarrow)$
Suppose $\U_1, \U_2,\ldots$ is a sequence of finite open covers of $wL$, 
say $\U_i=\{U_{i 1}, U_{i 2},\ldots, U_{i k_i}\}$.
 Without loss of generality
we may assume that each $\U_i$ consists of sets from $\B$, i.e. for each $i\in\N$ and $j\leq k_i$
there is some $F_{i j}\in\M$ closed in $X$ such that $U_{i j}=wL\setminus\widehat{F_{i j}}$.

Define $U_{i j}'=X\setminus F_{i j}$ and $\U_i'=\{U_{i 1}', U_{i 2}',\ldots, U_{i k_i}'\}$.
Note that $\U'_i$ is an open cover of $X$ since
$F_{i 1}\cap F_{i 2}\cap\ldots\cap F_{i k_i}=\emptyset$ 
($\U_i$ is a cover of $wL$).

Since $X$ is a compact $C$-space there exist $n\in\N$ and finite families of pairwise disjoint
open sets $\V'_1, \V'_2,\ldots,\V'_n$ such that each $\V'_i$ refines $\U'_i$ and $\bigcup_{i=1}^n\V'_i$
is a cover of $X$. Let us code this in terms of the lattice $2^X$.
First denote $\V'_i=\{V'_{i 1}, V'_{i 2},\ldots, V'_{i l_i}\}$ and
$G'_{i j}=X\setminus V'_{i j}$ for $i\leq n$ and $j\leq l_i$.
The following sentence $\psi$ is true in $H(\kappa)$:
\begin{tabular}{ll}
$(\psi)$ &
$
\hspace{-0.25cm}\left\{\hspace{0.2cm}
\begin{minipage}{0.91\textwidth}
There exist $G'_{1 1}, G'_{1 2},\ldots,G'_{1 l_1}, G'_{2 1}, G'_{2 2},\ldots G'_{2 l_2},\ldots,
G'_{n 1}, G'_{n 2},\ldots,G'_{n l_n}$ such that:\\
$(1)~\bigwedge_{i=1}^n \left(\bigwedge_{1\leq j<j'\leq l_i}
\left( G'_{i j}\cup G'_{i j'}=X\right)\right)$\\
$(2)~\bigwedge_{i=1}^n \left(\bigwedge_{j=1}^{l_i}\left(
\bigvee_{j'=1}^{k_i}\left( G'_{i j}\cap F_{i j'}=F_{i j'}\right)\right)\right)$\\
$(3)~\bigcap_{i=1}^n\bigcap_{j=1}^{l_i} G'_{i j}=\emptyset$.
\end{minipage}\right.$
\end{tabular}\\\\
Symbols $\bigwedge$ and $\bigvee$ abbreviate finite conjuctions and disjunctions.
Note that $F_{i j}$'s appear in $\psi$ as parameters from $\M$.

We have $H(\kappa)\models\psi$ and by elementarity $\M\models\psi$.
Hence, for $i\leq n$ and $j\leq l_i$  there are $G_{i j}\in\M$
which satisfy $(1-3)$ when placed in $\psi$ instead of $G'_{i j}$.
Take $V_{i j}=wL\setminus\widehat{G_{i j}}$ and $\V_i=\{V_{i 1}, V_{i 2},\ldots,V_{i l_k}\}$.
Then $\V_1, \V_2,\ldots,\V_n$ are families of pairwise disjoint sets (by $(1)$),
open in $wL$. For $i\leq n$ the
family $\V_i$ refines $\U_i$ (by $(2)$) and $\bigcup_{i=1}^n\V_i$ is a cover of $wL$ (by $(3)$).
\end{proof}

Now we will link the space $X$ with its hyperspace $C(X)$ in terms of lattices.
Namely, having the lattice
$2^X$ we define a lattice $K^*\in\M$, such that $wK^*$ is homeomorphic to $C(X)$.
Then, taking $K=K^*\cap\M$ we will show that $wK$ is homeomorphic to $C(wL)$.

$C(X)$ is defined (as a set) only in terms of $2^X$:
$$C(X)=\lbrace F\in 2^X\colon\neg(\exists G_1, G_2\in2^X)(G_1\cup G_2=F\wedge G_1\cap G_2=\emptyset)\rbrace.$$
Define $K^*$ as a sublattice of $(\Power(C(X)), \cup, \cap, \emptyset, C(X))$
generated by the family $\lbrace\F^*\colon\F\in[2^X]^{<\omega}\rbrace$, where
$$\F^*=C(X)\setminus\lbrace G\in C(X)\colon G\cap\bigcap
\F=\emptyset\wedge(\forall F\in\F)(F\cup G\neq F)\rbrace.$$
%Comparing this with the notation from \cite{Mich} we get
%$$\F^*=\left<X,\bigcap\F\right>\cup\bigcup_{F\in\F}\left<F\right>$$
%where $\left<X,F\right>=\{G\in C(X)\colon G\cap F\neq\emptyset\}$ and
%$\left<F\right>=\{G\in C(X)\colon G\subseteq F\}$.

The lattice $K^*$ is the closure under finite unions and intersections
of the family of sets $\F^*$ for all finite $\F\subseteq2^X$.
It is easy to verify that sets $\F^*$ form a closed base for $C(X)$.
Hence, $K^*$ is a closed base and a lattice simultaneously.
By Fact \ref{homeofact}, we get:

\begin{rem}
\label{obser}
 $C(X)$ is homeomorphic to $wK^*$.
\end{rem}
Since $X\in\M$, it follows directly by the definition of $K^*$ that $K^*\in\M$.
Take $K=K^*\cap\M$. The only thing we still lack is:
\begin{pro}
\label{kluczowa-propozycja}
$wK$ is homeomorphic to $C(wL)$.
\end{pro}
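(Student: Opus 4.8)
The goal is to identify $wK$ with $C(wL)$, where $K = K^* \cap \M$ and $K^*$ is the lattice generated by the sets $\F^*$ (for finite $\F \subseteq 2^X$) inside $\Power(C(X))$. The natural strategy is to mimic the proof of Fact~\ref{homeofact}: exhibit an explicit map and verify it is a homeomorphism, only now the index lattice lives inside $\M$. First I would record what $K$ looks like concretely: by elementarity, $K$ is generated (under finite unions and intersections, within $\Power(C(X))^{\M}$) by the sets $\F^*$ with $\F \in ([2^X]^{<\omega})^{\M} = [L]^{<\omega}$, since every finite subset of $\M$ belongs to $\M$. So $K$ is, up to the obvious correspondence, the sublattice of $\Power(C(wL))$ generated by the sets $\F^{\ast,L} := C(wL)\setminus\{G \in C(wL)\colon G\cap\bigcap\widehat\F=\emptyset\ \wedge\ \forall F\in\F\ (\widehat F\cup G\neq\widehat F)\}$ for $\F \in [L]^{<\omega}$, where $\widehat F \subseteq wL$ is the basic closed set. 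The key point, to be checked, is that the family $\{\F^{\ast,L}\colon \F\in[L]^{<\omega}\}$ is a closed base for $C(wL)$ closed under finite unions and intersections; then Fact~\ref{homeofact} applied to $wL$ in place of $X$ gives $C(wL) \cong w(\text{that lattice})$, and it remains to see that lattice is isomorphic to $K$.

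The heart of the matter is thus a translation lemma: for finite $\F \subseteq L$, the set $\F^{\ast,L}$ computed inside $C(wL)$ corresponds, under the isomorphism $L \cong 2^X \cap \M$, to $\F^* \cap K$ — i.e. to the ``trace on $\M$'' of the set $\F^*$. I would prove this by unwinding the definition of $\F^*$ and using that $\widehat{(\cdot)}$ is a lattice isomorphism of $L$ onto the basic closed sets of $wL$ that takes $\bigcap\F$ to $\bigcap\widehat\F = \widehat{\bigcap\F}$, together with the description of subcontinua: $G \in C(wL)$ iff $G$ is a closed connected subset, and the condition $\widehat F \cup G \neq \widehat F$ just says $G \not\subseteq \widehat F$. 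One must also use Remark~\ref{surject}: the surjection $q\colon wK^* \to wK$ dual to $K \subseteq K^*$, transported through Remark~\ref{obser}, becomes a continuous surjection $C(X) \to wK$, and I would show this surjection factors as $C(X) \xrightarrow{C(r)} C(wL) \to wK$ where $r\colon X \to wL$ is... no — rather, since $wL$ is a quotient of $X$ only in the sense of Wallman representations, the cleaner route is to work directly with the base and not pass through $X$ at all. So: establish that $\{\F^{\ast,L}\}$ is a closed base for $C(wL)$ stable under finite Boolean-lattice operations (the nontrivial inclusion is that these sets separate a point, i.e. a subcontinuum $G_0 \in C(wL)$, from a closed set not containing it, which follows from the standard fact that sets of the form $\langle U_1,\dots,U_n\rangle \cap C(wL)$ form a base and each such Vietoris-type set is a finite Boolean combination of sets $\F^{\ast,L}$).

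The main obstacle, I expect, is the verification that $\{\F^{\ast,L}\colon\F\in[L]^{<\omega}\}$ generates — under finite unions and intersections inside $\Power(C(wL))$ — exactly the lattice $K$, and not something larger or smaller. The subtlety is that $K = K^*\cap\M$ consists of \emph{those} elements of $K^*$ that happen to lie in $\M$; a priori a finite union $\bigcup_{i}\F_i^*$ with each $\F_i \in [L]^{<\omega}$ lies in $\M$ (finitely many parameters from $\M$), so $K$ is generated by $\{\F^*\colon\F\in[L]^{<\omega}\}$ as claimed, but I must confirm there is no collapse: distinct lattice words over these generators could in principle name the same subset of $C(X)$ yet different subsets of $C(wL)$, or vice versa. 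This is handled by the translation lemma above, which is faithful because $\widehat{(\cdot)}\colon L \to 2^{wL}$ is injective and the formula defining $\F^*$ uses only $\cap$, $\cup$, $=$, $\emptyset$ and membership — all absolute between the two settings once we pass through the isomorphism $L\cong 2^X\cap\M$ and the homeomorphism of Fact~\ref{homeofact}. Concretely, I would finish by defining $\Phi\colon wK \to C(wL)$ on ultrafilters by $\Phi(u) = \bigcap\{\F^{\ast,L}\colon \F^*\in u\}$, check it is a well-defined point of $C(wL)$ (the intersection is a single subcontinuum, using maximality of the ultrafilter and compactness), and verify continuity, injectivity and surjectivity exactly as in the ``tedious but routine'' verification of Fact~\ref{homeofact}.
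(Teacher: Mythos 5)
Your proposal follows essentially the same route as the paper's proof: identify $K$ by elementarity as the lattice generated by $\lbrace\F^*\colon\F\in[L]^{<\omega}\rbrace$, translate each generator to the corresponding basic closed set $\C_\F$ of $C(wL)$ via the isomorphism of $L$ with a closed base for $wL$, observe that these sets generate a closed base for $C(wL)$ isomorphic as a lattice to $K$, and conclude by Fact~\ref{homeofact}. Your explicit attention to the faithfulness of the translation (no collapse of lattice words) is a point the paper passes over silently, but it does not change the argument.
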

\begin{proof}
We know that $K^*$ is generated by the family $\{\F^*\colon\F\in[2^X]^{<\omega}\}$.
By elementarity, $K$ is generated by $\{\F^*\colon\F\in[L]^{<\omega}\}$.
Note that a basic closed set in $C(wL)$ is determined by $\F\in[L]^{<\omega}$
via the formula 
$$\C_\F=C(wL)\setminus\lbrace C\in C(wL)\colon C\cap\bigcap
\lbrace\widehat{F}\colon F\in\F\rbrace=\emptyset\wedge
(\forall F\in\F)(\widehat{F}\cup C\neq \widehat{F})\rbrace$$
(since $L$ is isomorphic to a closed base for $wL$).
Hence, the lattice $K$ is isomorphic to the lattice generated by $\{\C_\F\colon\F\in[L]^{<\omega}\}$,
which forms a closed base for $C(wL)$. By Fact \ref{homeofact} $wK$ is homeomorphic to $C(wL)$.
\end{proof}

Now we have all ingredients to prove Theorem \ref{non-metric}.
\begin{proof}[Proof of Theorem \ref{non-metric}]~

$(i)$
Suppose that $\dim X\geq2$. Proposition \ref{refl-dim} gives $\dim wL\geq2$.
By the results of M. Levin, J. T. Rogers, Jr. for metric continua \cite{LeRo}
we have that $C(wL)$ is not a $C$-space.
But $C(wL)$ is homeomorphic to $wK$ (Proposition \ref{kluczowa-propozycja}).
Hence $wK^*$ is neither a $C$-space (Theorem \ref{relf-C}).
By Remark \ref{obser}, $C(X)$ is homeomorphic to $wK^*$, so it is not a $C$-space.

$(ii)$ Similarily, suppose that $X$ is a $1$-dimensional, hereditarily indecomposable continuum.
Then $wL$ is also $1$-dimensional (Proposition \ref{refl-dim}) and hereditarily indecomposable
(Proposition \ref{HI-refl}). By result known for metric continua (\cite[Theorem 3.1]{Sta})
we have that $C(wL)$ is either $2$-dimensional or is not a $C$-space.
By Proposition \ref{kluczowa-propozycja}, $C(wL)$ is homeomorphic to $wK$.
Therefore, $wK^*$ is either $2$-dimensional (Proposition \ref{refl-dim})
or is not a $C$-space (Theorem \ref{relf-C}).
But $wK^*$ is homeomorphic to $C(X)$ by Remark \ref{obser}.
\end{proof}

\section{Remarks on $m$-$C$-spaces}
\begin{defi}[\cite{Fed}]
For $m\geq 2$ a space $X$ is said to be an $m$-$C$-space if
for each sequence $\U_1, \U_2, \ldots$
of $m$-element open covers of $X$, there exists a sequence $\V_1, \V_2, \ldots$,
such that each $\V_i$ is a family of pairwise disjoint open subsets of $X$,
$V_i\prec\U_i$ and $\bigcup_{i=1}^\infty \V_i$ is a cover of $X$.
\end{defi}

Observe that
\begin{center}
$2$-$C$-spaces $\supseteq 3$-$C$-spaces $\supseteq\ldots\supseteq m$-$C$-spaces
$\supseteq\ldots\supseteq C$-spaces.
\end{center}
Moreover, the following holds
\begin{fact}[{\cite[Proposition 2.11]{Fed}}]
\label{wid}
A space is weakly infinite dimensional if and only if it is a $2$-$C$-space.
\end{fact}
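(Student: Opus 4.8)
The plan is to prove both implications of Fact \ref{wid} through the standard dictionary between two-element open covers and pairs of disjoint closed sets. Recall that a closed set $C$ is a \emph{partition} between disjoint closed sets $A,B$ exactly when $C=X\setminus(P\cup Q)$ for some disjoint open $P\supseteq A$ and $Q\supseteq B$, and that $X$ is weakly infinite dimensional iff for every sequence $((A_i,B_i))_{i\in\N}$ of pairs of disjoint closed subsets of $X$ there are partitions $C_i$ between $A_i$ and $B_i$ with $\bigcap_i C_i=\emptyset$. On the covers side, since every member of a pairwise disjoint family $\V_i$ refining a two-element cover $\{O_i,O_i'\}$ is contained in $O_i$ or in $O_i'$, we may replace $\V_i$ by a family $\{W_i,W_i'\}$ with $W_i\subseteq O_i$, $W_i'\subseteq O_i'$, $W_i\cap W_i'=\emptyset$ and $W_i\cup W_i'=\bigcup\V_i$, still refining $\U_i$; thus $\bigcup_i\V_i$ covers $X$ iff $\bigcap_i\bigl(X\setminus(W_i\cup W_i')\bigr)=\emptyset$.

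For the implication ``weakly infinite dimensional $\Rightarrow$ $2$-$C$-space'', given two-element open covers $\U_i=\{O_i,O_i'\}$ I would set $A_i=X\setminus O_i'$ and $B_i=X\setminus O_i$; these are disjoint closed sets because $O_i\cup O_i'=X$. Choosing partitions $C_i=X\setminus(U_i\cup V_i)$ between $A_i$ and $B_i$ with $U_i\supseteq A_i$, $V_i\supseteq B_i$ disjoint open and $\bigcap_i C_i=\emptyset$, one gets $U_i\cap B_i\subseteq U_i\cap V_i=\emptyset$, hence $U_i\subseteq O_i$, and symmetrically $V_i\subseteq O_i'$. So $\V_i=\{U_i,V_i\}$ is a pairwise disjoint family refining $\U_i$, and $\bigcup_i\V_i=\bigcup_i(U_i\cup V_i)=X\setminus\bigcap_i C_i=X$.

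For the converse I would begin with disjoint closed sets $(A_i,B_i)$ and use normality to fix open $U_i^*\supseteq A_i$, $V_i^*\supseteq B_i$ with $\overline{U_i^*}\cap\overline{V_i^*}=\emptyset$; then $\U_i=\{X\setminus\overline{V_i^*},\,X\setminus\overline{U_i^*}\}$ is a two-element open cover. The $2$-$C$-space property gives disjoint open $W_i\subseteq X\setminus\overline{V_i^*}$, $W_i'\subseteq X\setminus\overline{U_i^*}$ with $\bigcup_i(W_i\cup W_i')=X$, and from $W_i\subseteq X\setminus\overline{V_i^*}$ one gets $\overline{W_i}\subseteq X\setminus V_i^*$, so $\overline{W_i}\cap B_i=\emptyset$, and symmetrically $\overline{W_i'}\cap A_i=\emptyset$. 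The crux — the only point that is not bookkeeping — is that the sets $W_i,W_i'$ need neither cover $X$ individually nor contain $A_i,B_i$, so $X\setminus(W_i\cup W_i')$ is not yet a partition; I would repair this by \emph{swelling}, putting $P_i=W_i\cup(U_i^*\setminus\overline{W_i'})$ and $Q_i=W_i'\cup(V_i^*\setminus\overline{W_i})$. A short computation using $W_i\cap W_i'=\emptyset$, $U_i^*\cap V_i^*=\emptyset$ and the two closure containments shows that $P_i,Q_i$ are disjoint open sets with $A_i\subseteq P_i$, $B_i\subseteq Q_i$ and $W_i\cup W_i'\subseteq P_i\cup Q_i$. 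Hence $C_i=X\setminus(P_i\cup Q_i)$ is a partition between $A_i$ and $B_i$ contained in $X\setminus(W_i\cup W_i')$, so $\bigcap_i C_i\subseteq\bigcap_i\bigl(X\setminus(W_i\cup W_i')\bigr)=\emptyset$ and $X$ is weakly infinite dimensional.

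I expect the swelling step in the second implication to be the main obstacle, since it is the place where normality is genuinely used and where honest partitions have to be manufactured out of the weaker data supplied by the $2$-$C$-space property. I would finally remark that the same proof, read with ``$\exists n\ \bigcap_{i\le n}C_i=\emptyset$'' in place of ``$\bigcap_i C_i=\emptyset$'' (which on the covers side amounts to allowing all but finitely many $\V_i$ to be empty), establishes the corresponding finitary equivalence, so the statement is insensitive to which customary form of weak infinite-dimensionality one adopts.
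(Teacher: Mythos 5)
Your argument is correct, and it is worth noting that the paper itself offers no proof of this fact at all: it is imported verbatim as \cite[Proposition 2.11]{Fed}, so there is nothing in the text to compare your route against. What you have written is the standard dictionary argument and it checks out in detail. The forward direction is clean: from a partition $C_i=X\setminus(U_i\cup V_i)$ between $A_i=X\setminus O_i'$ and $B_i=X\setminus O_i$ one really does get $U_i\subseteq O_i$ and $V_i\subseteq O_i'$ by the disjointness computation you give, and $\bigcup_i\{U_i,V_i\}$ covers $X$ exactly because $\bigcap_i C_i=\emptyset$. In the converse, the swelling $P_i=W_i\cup(U_i^*\setminus\overline{W_i'})$, $Q_i=W_i'\cup(V_i^*\setminus\overline{W_i})$ is exactly the right repair: all four cross-terms in $P_i\cap Q_i$ vanish (two by $S\cap(T\setminus\overline{S})=\emptyset$, one by $W_i\cap W_i'=\emptyset$, one by $U_i^*\cap V_i^*=\emptyset$), the inclusions $A_i\subseteq P_i$, $B_i\subseteq Q_i$ use precisely the closure containments $\overline{W_i}\cap B_i=\emptyset=\overline{W_i'}\cap A_i$ that you established, and the resulting partitions satisfy $\bigcap_i C_i\subseteq\bigcap_i(X\setminus(W_i\cup W_i'))=\emptyset$. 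Your closing remark is also the right caveat: the equivalence is stated for whichever of the two customary forms of weak infinite-dimensionality one fixes (all partitions meeting emptily versus some finite subfamily doing so), and your proof transfers verbatim between them; in the compact setting in which the paper actually invokes Fact \ref{wid} the two forms coincide anyway, so nothing is lost.
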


One can  easily adopt the proof of Theorem \ref{relf-C}
to obtain the following:
\begin{pro}
\label{m-C-refl}
Let $K^*$ be a lattice in $\M$ and $K=K^*\cap\M$.
Then $wK^*$ is an $m$-$C$-space if and only if such is $wK$.\edowod
\end{pro}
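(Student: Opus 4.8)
The plan is to retrace the proof of Theorem~\ref{relf-C}, working throughout with $m$-element covers in place of arbitrary finite ones. Write $\B^*=\{wK^*\setminus\widehat{a}\colon a\in K^*\}$ and $\B=\{wK\setminus\widehat{a}\colon a\in K\}$. Since $K^*$ and $K$ are lattices, $\widehat{a\cup b}=\widehat a\cup\widehat b$ and $\widehat{a\cap b}=\widehat a\cap\widehat b$, so $\B^*$ and $\B$ are bases for the open sets of $wK^*$, respectively $wK$, that are closed under \emph{finite unions} as well as finite intersections. The one genuinely new point compared with Theorem~\ref{relf-C} is the following. Given an $m$-element open cover $\{U_1,\dots,U_m\}$ of $wK^*$, shrink it to a closed cover $\{C_1,\dots,C_m\}$, cover each compact $C_j$ by finitely many members of $\B^*$ lying inside $U_j$, and let $U_j'$ be their union; then $U_j'\in\B^*$ because finite unions of basic open sets are basic open. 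Thus $\{U_1',\dots,U_m'\}$ is again an $m$-element open cover (we may pad with repetitions if some $U_j'$ is empty), it refines the original cover, and any pairwise-disjoint open refinement of $\{U_j'\}$ refines $\{U_j\}$. Hence every reduction to covers by basic sets used in Theorem~\ref{relf-C} respects the bound $m$; the analogous statement holds in $wK$ with the base $\B$.

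With this remark in place, the $(\Leftarrow)$ direction runs as follows. Suppose $wK^*$ is not an $m$-$C$-space. By compactness of $wK^*$ and the previous paragraph, the failure is witnessed by a sequence $(\F_i)_{i=1}^\infty$ of subsets of $K^*$, each of size at most $m$, satisfying the analogue of $(\ast)$ for all finite pairwise-disjoint refinement data. This is expressed by the sentence $(\varphi)$ from the proof of Theorem~\ref{relf-C}, with $2^X$ replaced by $K^*$ and with the single modification that the $\F_i$ (and the $\G_j$ occurring in $(\ast)$) are required to have at most $m$ elements. Since $K^*\in\M$ and $\M\prec H(\kappa)$, elementarity yields such a sequence $(\F_i)\in\M$; each $\F_i$ is finite, so $\F_i\subseteq K^*\cap\M=K$. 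Exactly as in Theorem~\ref{relf-C}, the covers $\U_i=\{wK\setminus\widehat F\colon F\in\F_i\}$ have at most $m$ members and witness that $wK$ is not an $m$-$C$-space: any pairwise-disjoint open refinement of finitely many of them is pushed into $\B$ without increasing the number of sets, the resulting finite families lie in $\M$, and $(\ast)$, true in $\M$, shows their union is not a cover of $wK$.

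For $(\Rightarrow)$, let $\U_1,\U_2,\dots$ be $m$-element open covers of $wK$; by the first paragraph we may take each $\U_i=\{wK\setminus\widehat{a_{i1}},\dots,wK\setminus\widehat{a_{ik_i}}\}$ with $k_i\le m$ and $a_{ij}\in K\subseteq\M$. The surjection $q\colon wK^*\to wK$ of Remark~\ref{surject} satisfies $q^{-1}(\widehat a)=\widehat a$ for $a\in K$, so $\U_i^*=\{wK^*\setminus\widehat{a_{ij}}\colon j\le k_i\}$ is an $m$-element open cover of $wK^*$. Since $wK^*$ is an $m$-$C$-space, there are $n$ and pairwise-disjoint open families $\V_1^*,\dots,\V_n^*$ refining $\U_1^*,\dots,\U_n^*$ whose union covers $wK^*$; pushing them into $\B^*$ and running the sentence $(\psi)$ of Theorem~\ref{relf-C} (which needs no change, as it constrains only the covers, whose sizes $k_i\le m$ are already fixed parameters) together with elementarity of $\M$ produces pairwise-disjoint open families $\V_1,\dots,\V_n$ in $wK$ that refine $\U_1,\dots,\U_n$ and cover $wK$. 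Hence $wK$ is an $m$-$C$-space.

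I expect no essential obstacle: the proof is a bookkeeping overlay on Theorem~\ref{relf-C}. The only point needing real (if routine) care is the verification that each step replacing an open cover by a refining cover built from members of the Wallman base does not inflate the number of cover elements beyond $m$ --- which is exactly what closure of $\B^*$ and $\B$ under finite unions guarantees, and is the reason the statement holds for each fixed $m\ge2$ rather than only for property $C$ itself.
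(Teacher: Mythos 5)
Your argument is correct and is exactly the adaptation of Theorem~\ref{relf-C} that the paper intends (the paper offers no further proof, saying only that one can adopt that argument); you rightly isolate the one genuinely new point, namely that closure of the Wallman bases $\B^*$ and $\B$ under finite unions lets every reduction to basic covers preserve the bound $m$. One small remark: it is cleaner not to restrict the $\G_j$ in $(\ast)$ to size at most $m$ (only the $\F_i$ need that bound), since the pairwise-disjoint refinements $\V_i$ may a priori have more than $m$ members; if you do impose the restriction, you must first amalgamate the members of each $\V_i$ according to which element of $\U_i$ contains them before invoking $(\ast)$.
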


Let us recall two definitions and one question from \cite{vdS}:
\begin{defi}[{\cite[Definition 2.7]{vdS}}]
We will say that a property $\mathcal{P}$ of a compact space is \emph{elementarily reflected}
if whenever some compact space $X$ has the property $\mathcal{P}$ then the Wallman representation
$wL$ of any elementary sublattice $L$ of $2^X$ also has property $\mathcal{P}$.
\end{defi}
\begin{defi}[{\cite[Definition 2.8]{vdS}}]
A property $\mathcal{P}$ of a compact space is \emph{elementarily reflected by submodels}
if whenever some compact space $X$ has the property $\mathcal{P}$ then the Wallman representation
$wL$ of any elementary sublattice $L$ of the form $L=2^X\cap\M$, where $2^X\in\M$ and $\M\prec H(\kappa)$
(for a large enough regular $\kappa$),
 also has property $\mathcal{P}$.
\end{defi}
\begin{que}[{\cite[Question 2.32]{vdS}}]
Is having strong infinite dimension elementarily reflected, and is having not
strong infinite dimension elementarily reflected?
\end{que}
Recall that, by  definition, a space is strongly infinite dimensional if it
is not weakly  infinite dimensional. 

Proposition \ref{m-C-refl} gives a partial answer to this question.
Indeed, in particular it says that that both these properties are elementarily reflected by submodels
(use the characterization of the weak infinite dimension from Fact \ref{wid}).
Moreover, following the proof of the  Theorem \ref{relf-C}
one can observe that the model $\M\prec H(\kappa)$ is not needed for the left-to-right implication.
That means property $C$ is elementarily reflected and the opposite is elementarily
reflected by submodels. Properties \mbox{$m$-$C$} and non-$m$-$C$ behave in the same way.
Summarizing, we can say that having strong infinite dimension is elementarily reflected by submodels,
and having not strong infinite dimension is elementarily reflected.

It is not known if the notions of property $C$ and weakly infinite dimension coincide
within the class of compact spaces.
However, since both properties are elementarily reflected by submodels,
there exists a metric counterexample which distinguishes these two notions if and only if
there exists a non-metric one.

\begin{acknowledgement}
The author is indebted to Piotr Borodulin-Nadzieja, Ahmad Farhat and Pawe\l\ Krupski
for helpful discussions.
\end{acknowledgement}

\bibliographystyle{amsplain}

\end{document}